\theoremstyle{plain}
\newtheorem{theorem}{Theorem}[section]
\newtheorem{lemma}[theorem]{Lemma}
\theoremstyle{definition}
\theoremstyle{remark}
\newtheorem{remark}[theorem]{Remark}
\theoremstyle{conjecture}
\def\R{\mathbb{R}}
\def\Q{\mathbb{Q}}
\def\F{\mathbb{F}}
\def\O{\mathbb{O}}
\newcommand{\rank} {\operatorname{rank}}
\newcommand{\circm} {\operatorname{circm}}
\title[ Ryser's Conjecture]
{On Ryser's Conjecture}
\date{\today}
\author[A. Domic]{Antun Domic}
\address{Kepler Computing and Stanford University\\
\\
\\
}
\email{domic@stanford.edu}
\author[L. H. Gallardo]{Luis H. Gallardo}
\address{Univ. Brest,
UMR CNRS 6205 \\
Laboratoire de Math\'ematiques de Bretagne Atlantique\\
6, Av. Le Gorgeu, C.S. 93837, Cedex 3, F-29238 Brest \\
France}
\email{Luis.Gallardo@univ-brest.fr}
\subjclass[2000]{Primary 11B30, 15B34  Secondary 11C20}
\keywords{Circulant matrices, Hadamard matrices, Eigenvalues, Eigenvectors, Gram matrices}
\begin{document}



\begin{abstract}
We provide new sufficient conditions under which Ryser's conjecture holds.
\end{abstract}

\maketitle

\section{Introduction}
\label{intro}
Let A be a square matrix of order $n$ with complex coefficients. We denote by $A^{*}$ the conjugate transpose of the matrix $A$. For any pair of rows $R_i, R_j$ of a matrix $A$ with real coefficients, we denote by $\langle R_{i}, R_{j} \rangle$ the usual scalar product.  
A \emph{circulant} matrix $A := \circm(a_1,\ldots, a_n)$ of order $n$ is a matrix of order $n$ whose first row is $[a_1, \ldots, a_{n}]$ and in which each row after the first is obtained by a cyclic shift of its predecessor by one position. For example, the second row $R_2$ of $A$ is $[a_{n}, a_1, \ldots, a_{n-1}]$. With $\pi := \circm(0,1,0,\ldots,0)$, one has that $R_2$ is the first row of $\pi A$.  

A \emph{Hadamard} matrix $H$ of order $n$ is a matrix of order $n$ with entries in $\{-1,1\}$ such that $K := \frac{H}{\sqrt{n}}$ is orthogonal. A \emph{circulant Hadamard} matrix of order $n$ is a circulant matrix that is also Hadamard. Besides the two trivial matrices of order $1$,  
$H_1 :=\circm(1)$ and $H_2 := -H_1$, the remaining $8$ known circulant Hadamard matrices are $ H_3 :=\circm(1,-1,-1,-1), H_4 := -H_3, H_5 :=\circm(-1,1,-1,-1), H_6 := -H_5, $ $H_7 := \circm(-1,-1,1,-1), H_8 := -H_7,
 H_9 := \circm(-1,-1,-1,1), H_{10} := -H_9.$

If $H = \circm(h_1,\ldots,h_n)$ is a circulant Hadamard matrix of order $n$, then its \emph{representer} polynomial is
\[
R(x) := h_1 + h_2 x + \cdots + h_n x^{n-1}.
\]

No other circulant Hadamard matrices are known. Ryser conjectured in 1963 (see \cite{Ryser}, \cite[p.~97]{Davis}) that no such matrices exist when $n > 4$.  
Previous work on the conjecture include \cite{EGR,EGRcomb,JedwabLloyd,luisAMEN,luisEJC,luisAMEN1,luisMC2,Mato,NG,Leung,Turyn}.

Let $H = \circm(h_1,\ldots,h_n)$ be a circulant Hadamard matrix of order $n \geq 4$.  
Define
\[
E_1 := \circm(h_1,h_3,h_5,\ldots,h_{n-1}), \quad
E_2 := \circm(h_2,h_4,h_6,\ldots,h_{n}).
\]
When $n=4$, both $E_1$ and $E_2$ have rank equal to $1$ (as their determinant is 0, see the matrices $H_3, \ldots, H_{10}$).

Two other interesting properties of $E_1$ and $E_2$ when $n=4$ are the following:
\begin{remark}
\label{projMcW}
Let $K_1 := \frac{E_1+E_2}{2}$ and $K_2 := \frac{E_1-E_2}{2}$. Then:
\begin{itemize}
\item[\rm{(a)}]
$K_1$ and $K_2$ are projections.
\item[\rm{(b)}]
$K_1$ and $K_2$, reduced modulo $2$, are symmetric orthogonal matrices.
\end{itemize}
\end{remark}

Property~(b) in Remark~\ref{projMcW} is significant because of a result of McWilliams \cite[Corollary~1.8]{jessieMW} (see also Lemma~\ref{jessie}) which states that for $n>2$, the orthogonal group $\O(n,\F_2)$ consists only of the identity matrix. In other words, $n=4$ is the only case in which (b) holds.

When a circulant Hadamard matrix of order $n \geq 4$ is such that both $E_1$ and $E_2$ are non-singular, it follows from \cite{luisAMEN1} and our Lemma~\ref{graphR} (see Section~\ref{toolsBM}) that $n$ must equal $4$. The rank-$1$ property that we consider in the present paper is the first natural case to study in order to make progress on the open case when at least one of $E_1, E_2$ is singular.

In this paper we prove two results. Essentially, the first shows that for circulant Hadamard matrices of order $n$, the rank-$1$ property and properties~(a) and~(b) in Remark~\ref{projMcW} hold for $n=4$ and do not hold when $n>4$. Our second main result, which follows from a simple characterization of circulant matrices of rank $\leq 2$ together with the Plotkin bound for codewords, proves the more general statement that there is no circulant Hadamard matrix of order $>4$ in which either one of the submatrices $E_1, E_2$ has rank $1$ while the other has arbitrary rank, or both have rank $2$.

More precisely, in this paper we prove the following two theorems.

\begin{theorem}
\label{HadBM}
Let $H =  \circm(h_1,\ldots,h_n)$ be a circulant Hadamard matrix of order $n \geq 4$. 
Let $E_1 := \circm(h_1,h_3,h_5,\ldots,h_{n-1})$  and  $E_2 := \circm(h_2,h_4,h_6,\ldots,h_{n})$. We denote by $R_j$, respectively $S_j$, the rows of $E_1$ and $E_2$, for $j=1,\ldots,n/2$.  
Let $G_1 := E_1 E_1^{*}$ and $G_2 := E_2 E_2^{*}$ be the Gram matrices of $E_1$ and $E_2$.  
Assume that one of the following statements holds:
\begin{itemize}
\item[\rm{(a)}]
$\rank(E_1) = 1 = \rank(E_2)$.
\item[\rm{(b)}]
All entries of $G_1$ and $G_2$ have the same absolute value.
\item[\rm{(c)}]
$R_1$ and $R_2$ are linearly dependent over $\Q$, and $S_1$ and $S_2$ are linearly dependent over $\Q$. 
\item[\rm{(d)}]
$\vert \langle R_{1}, R_{2} \rangle \vert = \vert \langle R_{1}, R_{1} \rangle \vert$, and $\vert \langle S_{1}, S_{2} \rangle \vert = \vert \langle S_{1}, S_{1} \rangle \vert$.
\end{itemize}
Then $n=4$. 
\end{theorem}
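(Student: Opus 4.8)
The plan is to reduce all four hypotheses to the single rank condition~(a) and then to exploit the positive semidefiniteness of the Gram matrices together with one structural identity. The engine of the argument is the identity
\[
G_1 + G_2 = n\, I_{n/2},
\]
which I would establish first. Both $E_1$ and $E_2$ are circulant of order $m := n/2$, so $G_1$ and $G_2$ are real symmetric circulant matrices, simultaneously diagonalized by the Fourier matrix of order $m$; writing $\mu_k,\nu_k$ for the eigenvalues of $E_1,E_2$ (the values of their representer polynomials at the $m$-th roots of unity), the eigenvalues of $G_1,G_2$ are $|\mu_k|^2,|\nu_k|^2$. To compute the first row of $G_1+G_2$, I note that its diagonal entry is $\langle R_1,R_1\rangle + \langle S_1,S_1\rangle = m+m = n$, while the entry in position $1+t$ equals $\langle R_1,R_{1+t}\rangle + \langle S_1,S_{1+t}\rangle = \sum_u h_u h_{u-2t}$, the autocorrelation of the defining row $(h_1,\dots,h_n)$ at the even lag $2t$. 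The Hadamard condition $HH^{*}=nI$ forces every nonzero-lag autocorrelation to vanish, and $2t\not\equiv 0 \pmod n$ for $1\le t\le m-1$, so all off-diagonal entries vanish. In eigenvalue form this reads $|\mu_k|^2+|\nu_k|^2 = n$ for every $k$.

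Granting the identity, I would show that hypothesis~(a) already forces $n=4$, using only $\rank(E_1)=1$. Since $E_1$ is normal, $\rank(E_1)=1$ means $G_1$ has a single nonzero eigenvalue; as $G_1$ is positive semidefinite with trace $\sum_i\langle R_i,R_i\rangle = m^2 = n^2/4$, that eigenvalue equals $n^2/4$. Then $G_2 = nI_m - G_1$ has $n - n^2/4$ among its eigenvalues, and since $G_2$ is a Gram matrix it is positive semidefinite, whence $n - n^2/4 \ge 0$, i.e. $n\le 4$. With the standing assumption $n\ge 4$ this gives $n=4$.

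It remains to run the implications $(b)\Rightarrow(d)\Rightarrow(c)\Rightarrow(a)$, each elementary. For $(b)\Rightarrow(d)$: the diagonal entries of $G_1$ equal $\langle R_1,R_1\rangle = m$, so if all entries of $G_1$ share one absolute value it must be $m$, giving $|\langle R_1,R_2\rangle| = m = |\langle R_1,R_1\rangle|$, and likewise for $G_2$. For $(d)\Rightarrow(c)$: the relation $|\langle R_1,R_2\rangle| = |\langle R_1,R_1\rangle| = \|R_1\|\,\|R_2\|$ is the equality case of Cauchy--Schwarz, so $R_1,R_2$ are proportional, and being rational vectors they are dependent over $\Q$; similarly for $S_1,S_2$. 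For $(c)\Rightarrow(a)$: if $R_2 = cR_1$ with $c\in\Q$, then since both are $\pm 1$ vectors $c=\pm 1$, and applying the cyclic shift $\sigma$ (so that $R_{i+1}=\sigma(R_i)$) yields $R_i = c^{\,i-1}R_1$ for all $i$; hence every row of $E_1$ is $\pm R_1$ and $\rank(E_1)=1$, and the same argument with $S_1,S_2$ gives $\rank(E_2)=1$.

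The step I expect to be the genuine obstacle — and the one I would write most carefully — is the identity $G_1+G_2 = nI_{n/2}$, in particular the index bookkeeping showing that the off-diagonal entries are even-lag autocorrelations of $(h_1,\dots,h_n)$ and so vanish by the Hadamard property. Everything downstream (the positive-semidefinite trace inequality and the three implications) is then short. I would also verify the harmless point that here rank equals the number of nonzero eigenvalues, which is immediate since circulant matrices are normal.
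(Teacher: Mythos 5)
Your proof is correct, and it takes a genuinely different route from the paper's. The paper runs the reductions in the opposite direction: it uses Lemma~\ref{crankreal} to reduce (a) and (b) to (c) and (d), and then treats (c) and (d) combinatorially --- Lemma~\ref{circrank1} (for (c)) and Lemma~\ref{cauchyschw} (for (d)) show that the first rows of $E_1$ and $E_2$ are each either constant or alternating, and the proof concludes by counting signs against the regularity data of Lemma~\ref{regular} ($n=4h^2$ with each row having $2h^2\pm h$ entries of each sign), forcing $h=1$. You instead run (b)$\Rightarrow$(d)$\Rightarrow$(c)$\Rightarrow$(a) and settle (a) spectrally, via $G_1+G_2=nI_{n/2}$ together with positive semidefiniteness: since $G_1$ has rank one and trace $(n/2)^2$, its unique nonzero eigenvalue is $n^2/4$, and $G_2=nI-G_1\succeq 0$ forces $n\le 4$. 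Your key identity is sound --- it is exactly the entrywise identity \eqref{oddeven} inside the proof of Lemma~\ref{graphR}, which the paper states and exploits only in summed form (Lemma~\ref{graphR}, then Lemma~\ref{misscase}) --- and the three implications are as routine as you indicate. What your route buys: it needs only $\rank(E_1)=1$ rather than both rank conditions, so it subsumes part (a) of Theorem~\ref{PlotkinR}, which the paper proves separately with the Plotkin bound (Lemmas~\ref{plotkin} and~\ref{equal1HAD}); and it bypasses Lemma~\ref{regular}, i.e.\ the nontrivial input $n=4h^2$, entirely. What the paper's route buys: its combinatorial machinery (the rank-structure lemmas and the Plotkin bound) extends to the rank-two case of Theorem~\ref{PlotkinR}(b), which your trace inequality alone does not reach --- with $\rank(E_1)=2$ the same argument only yields $n\le 8$.
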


\begin{remark}
\label{neq4}
All statements (a), (b), (c), and (d) of the theorem hold when $n=4$. Moreover, for any entry $e$ of $G_1$ or $G_2$, we have $\vert e \vert = 2 = n/2$.
\end{remark}

\begin{remark}
\label{goyeneche}
Our results also cover the recent generalization of the circulant Hadamard conjecture \cite{GoyenecheT} by Goyeneche and Turek, since a generalized circulant Hadamard matrix $H(d) := \circm(d,h_2,\ldots,h_n)$ satisfying our hypotheses necessarily has $\vert d\vert = 1$.
\end{remark}

Our second main result is as follows:
\begin{theorem}
\label{PlotkinR}
With the notation of Theorem~\ref{HadBM}, there is no circulant Hadamard matrix $H$ of order $n > 4$ in the following two cases:
\begin{itemize}
\item[\rm{(a)}]
One of $E_1, E_2$ has rank $1$.
\item[\rm{(b)}]
Both $E_1$ and $E_2$ have rank $2$.
\end{itemize}
\end{theorem}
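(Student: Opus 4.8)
The plan is to read the circulant Hadamard condition through the discrete Fourier transform of the two interleaved sub-sequences. Write $n=2m$, set $\zeta=e^{2\pi i/m}$, and for $j=0,\dots,m-1$ let $\lambda_j:=\sum_{k} h_{2k-1}\,\zeta^{jk}$ and $\eta_j:=\sum_{k} h_{2k}\,\zeta^{jk}$ be the eigenvalues of the circulants $E_1$ and $E_2$. Using the factorisation $R(x)=R_{E_1}(x^2)+x\,R_{E_2}(x^2)$ together with $|R(\omega)|^2=n$ at the $n$-th roots of unity (the eigenvalue form of $HH^{*}=nI$), I would average over the two square roots $\pm\omega$ of each $m$-th root of unity; the cross term cancels and one obtains the key identity
\[
|\lambda_j|^2+|\eta_j|^2=n\qquad(0\le j\le m-1),
\]
which I would record as the Fourier content of Lemma~\ref{graphR}. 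Since the right-hand side is strictly positive, for every $j$ at least one of $\lambda_j,\eta_j$ is nonzero, so $\rank(E_1)+\rank(E_2)\ge m=\tfrac n2$, because each rank counts the nonvanishing eigenvalues.

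For part (a), suppose $\rank(E_1)=1$, say $\lambda_j\ne0$ only at $j=j_0$. Then $|\eta_j|^2=n$ for every $j\ne j_0$, and Parseval's identity $\sum_j|\eta_j|^2=m\sum_k h_{2k}^2=m^2$ forces
\[
|\eta_{j_0}|^2=m^2-(m-1)n=m^2-2m(m-1)=m(2-m)\ge 0,
\]
whence $m\le 2$ and $n=4$; the symmetric computation applies if $\rank(E_2)=1$. This is precisely the point where the Plotkin/energy bound for the $\pm1$ codewords $h_{2k}$ does the work, collapsing the configuration to the trivial order.

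For part (b), the inequality $\rank(E_1)+\rank(E_2)\ge m$ gives $m\le 4$, so with $n>4$ only $m=3$ and $m=4$ remain, and here I would invoke the \emph{characterization of $\pm1$ circulants of rank $\le 2$}: such a circulant has rank $1$ iff its sequence is constant or alternating, and rank $2$ only when $4\mid m$ and its two nonzero eigenvalues form the conjugate pair at $j=m/4$, equivalently $e_{k+2}=-e_k$ (period $4$). For $m=3$ a $3\times3$ $\pm1$ circulant has rank $1$ or $3$, never $2$, since the real eigenvalue $\sum_k e_k$ is an odd integer and hence nonzero. For $m=4$ the identity forces the supports $\{j:\lambda_j\ne0\}$ and $\{j:\eta_j\ne0\}$ to be two conjugation-closed pairs partitioning $\{0,1,2,3\}$, namely $\{1,3\}$ and $\{0,2\}$; but support exactly $\{0,2\}$ is impossible for a $\pm1$ sequence, as it forces period $2$, for which one of $\sum_k e_k,\ \sum_k(-1)^k e_k$ vanishes. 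Hence $E_1,E_2$ cannot both have rank $2$, and no order $n>4$ survives.

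The step I expect to be the main obstacle is the rank-$2$ classification: proving that a $\pm1$ circulant of rank exactly $2$ must sit at the conjugate pair $j=m/4$ (so that $e_{k+2}=-e_k$) rather than at some other pair $\zeta^{\pm j}$. This is where the rigidity of $\pm1$ entries is essential, since the autocorrelations $\sum_k e_k e_{k+\ell}=m\cos(2\pi j\ell/m)$ must be integers in $[-m,m]$; by Niven's theorem, or equivalently by the Plotkin bound applied to the finitely many distinct rows, the angle is then forced into $2\pi j/m\in\tfrac{\pi}{2}\Z$. Carrying this out cleanly, and ensuring that no exotic rank-$2$ pattern escapes, is the crux; once it is in hand, the residual cases $m\in\{3,4\}$ close as above.
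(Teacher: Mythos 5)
Your proof is correct, and it takes a genuinely different route from the paper's. Your central tool is the pointwise spectral identity $|\lambda_j|^2+|\eta_j|^2=n$ at every $m$-th root of unity (where $m=n/2$), obtained by averaging $|R(\pm\omega)|^2=n$ over the two square roots of $\zeta^j$; the paper only ever uses the $j=0$ instance of this identity (that is the computation inside Lemma~\ref{misscase}, $\lambda_1^2+\lambda_2^2=n$, derived there from Lemma~\ref{graphR}). From your identity, rank counting (circulants are unitarily diagonalizable, so rank equals the number of nonvanishing eigenvalues) gives $\rank(E_1)+\rank(E_2)\ge n/2$, and Parseval disposes of part (a): $(m-1)\cdot 2m\le m^2$ forces $m\le 2$. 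The paper instead argues combinatorially: for (a), Lemma~\ref{circrank1} shows a rank-one $\pm1$ circulant is constant or alternating, which produces an all-ones submatrix of $H$ of size $2h^2\times 2h^2$ (or $h^2\times h^2$), and the Plotkin-type bound $ab\le n$ of Lemma~\ref{equal1HAD} forces $h=1$; for (b), it invokes $n=4h^2$ (Lemma~\ref{regular}) and a $\Q$-linear analysis of rank-two circulants (Lemmas~\ref{circrank2}, \ref{klmrank2}, \ref{uvuu}), whereas you get $m\le 4$ from the rank inequality and close $m=3,4$ by hand. Your route is shorter, avoids both the Plotkin bound and the regularity theorem $n=4h^2$, and in fact yields more: the same Parseval computation shows $\rank(E_i)\ge n/4$ for each $i$, which subsumes both (a) and (b) as soon as $n>8$. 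One clarification for you: the worry in your last paragraph, that the general classification of rank-two $\pm1$ circulants (support at $j=m/4$, i.e.\ $e_{k+2}=-e_k$) is the crux, is unfounded, because your argument never needs that classification. Only $m\in\{3,4\}$ survive the rank inequality, and your direct treatment of those is complete: for $m=3$ the eigenvalue $\sum_k e_k$ is odd hence nonzero and the complex pair vanishes jointly, so rank $2$ is impossible; for $m=4$ a support equal to $\{0,2\}$ forces period two and hence rank one. (Indeed $m=3$, i.e.\ $n=6$, is vacuous anyway, since a Hadamard matrix of order $>2$ has order divisible by $4$.)
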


The needed preliminaries for the proofs are given in Section~\ref{toolsBM}. The proof of Theorem~\ref{HadBM} is presented in Section~\ref{HaddoneBM}.  The proof of Theorem~\ref{PlotkinR} is presented in Section~\ref{Hdone2}.

\section{Preliminaries}
\label{toolsBM}

Our first lemma is well known and easy to prove.

\begin{lemma}
\label{crankreal}
Let $A$ be a real matrix. Then
$\rank(A^{*}A) = \rank(A A^{*}) = \rank(A) = \rank(A^{*})$.
\end{lemma}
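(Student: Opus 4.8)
The plan is to reduce all four equalities to a single null-space identity. Since rank and nullity are linked by the rank--nullity theorem, it suffices to show that $A$ and $A^{*}A$ have the same kernel, and then to invoke the elementary fact that a matrix and its conjugate transpose have equal rank.

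First I would establish the core inclusion $\ker(A^{*}A)\subseteq \ker(A)$. Given a column vector $x$ with $A^{*}Ax=0$, I would multiply on the left by $x^{*}$ to obtain $x^{*}A^{*}Ax = (Ax)^{*}(Ax)=0$. The quantity $(Ax)^{*}(Ax)$ is the squared Hermitian norm of the vector $Ax$, hence a sum of squares of the moduli of its entries; its vanishing forces $Ax=0$. The reverse inclusion $\ker(A)\subseteq\ker(A^{*}A)$ is immediate, so $\ker(A^{*}A)=\ker(A)$. Because $A$ and $A^{*}A$ have the same number of columns, rank--nullity then yields $\rank(A^{*}A)=\rank(A)$. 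This is exactly the step where working with the conjugate transpose $A^{*}$, equivalently the ordinary transpose since $A$ is real, is essential: for a general complex $A$ the bilinear form $x^{T}A^{T}Ax$ could vanish on a nonzero isotropic vector, but positive definiteness of the Hermitian form rules this out here.

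Next I would apply the identity just proved with $A^{*}$ in place of $A$. Since $(A^{*})^{*}=A$, this gives $\rank(AA^{*})=\rank(A^{*})$. Finally, I would use that passing to the transpose preserves rank (row rank equals column rank) and that entrywise conjugation preserves rank, so that $\rank(A^{*})=\rank(A)$; for real $A$ this is simply $\rank(A^{T})=\rank(A)$. Chaining these equalities gives $\rank(A^{*}A)=\rank(AA^{*})=\rank(A)=\rank(A^{*})$, as claimed.

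I do not anticipate a genuine obstacle, as the statement is standard; the only point requiring care is the kernel equality $\ker(A^{*}A)=\ker(A)$, which hinges entirely on the implication $(Ax)^{*}(Ax)=0 \Rightarrow Ax=0$. Everything else is a formal consequence of rank--nullity together with the invariance of rank under transposition and conjugation.
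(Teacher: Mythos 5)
Your proof is correct. The paper does not actually supply a proof of this lemma---it is stated as ``well known and easy to prove''---and your argument is precisely the standard one that would be given: the kernel identity $\ker(A^{*}A)=\ker(A)$ via positive definiteness of the Hermitian form, rank--nullity, the substitution $A\mapsto A^{*}$, and invariance of rank under transposition. Your side remark about why the conjugate transpose (rather than the plain transpose over $\C$) is needed to exclude isotropic vectors is also accurate, so the proposal fills the omitted proof correctly with no gaps.
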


The following lemma (\cite{Davis}) is useful.

\begin{lemma}
\label{circulant}
Let $C := \circm(c_{1},\ldots,c_{k})$ be a circulant matrix of order $k$. Let $v := [1, \ldots, 1] \in \R^k$. Then $v$ is an eigenvector of $C$ 
with associated eigenvalue $\lambda := c_{1} + \cdots + c_{k}$.
\end{lemma}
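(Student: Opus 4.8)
The plan is to verify the eigenvector relation $Cv = \lambda v$ directly by computing the matrix--vector product. The essential structural fact, which follows immediately from the definition of a circulant matrix recalled in Section~\ref{intro}, is that every row of $C$ is a cyclic permutation of the first row $[c_1,\ldots,c_k]$. Consequently each row contains each of the entries $c_1,\ldots,c_k$ exactly once, so every row of $C$ has the same sum, namely $\lambda = c_1 + \cdots + c_k$.

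Concretely, writing $C = (C_{ij})$, the cyclic-shift rule (the second row being $[c_k,c_1,\ldots,c_{k-1}]$, and so on) gives $C_{ij} = c_{j-i+1}$, where the subscript is read modulo $k$ with representatives in $\{1,\ldots,k\}$. First I would compute the $i$-th coordinate of $Cv$: since $v = [1,\ldots,1]$, this coordinate is simply the $i$-th row sum $\sum_{j=1}^{k} C_{ij} = \sum_{j=1}^{k} c_{j-i+1}$. As $j$ ranges over $\{1,\ldots,k\}$, the index $j-i+1$ ranges over a complete residue system modulo $k$, so this sum equals $c_1 + \cdots + c_k = \lambda$, independently of $i$. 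Hence every coordinate of $Cv$ equals $\lambda$, that is $Cv = \lambda v$, and since $v \neq 0$ this exhibits $v$ as an eigenvector of $C$ with eigenvalue $\lambda$.

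There is no substantive obstacle here: the statement is purely a consequence of the permutation structure of the rows. The only point requiring a line of care is confirming that the map $j \mapsto j-i+1 \pmod{k}$ is a bijection of $\{1,\ldots,k\}$ for each fixed $i$, which guarantees that the $i$-th row sum is indeed the full sum $\lambda$ and not a partial one.
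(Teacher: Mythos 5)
Your proof is correct. The paper does not actually prove this lemma; it simply cites Davis's book on circulant matrices, so there is no internal argument to compare against. Your direct verification --- using the index formula $C_{ij} = c_{j-i+1}$ (indices modulo $k$) to show every row sum equals $\lambda$, so that $Cv = \lambda v$ with $v \neq 0$ --- is the standard proof, and your care in checking that $j \mapsto j-i+1$ is a bijection of the index set modulo $k$ is exactly the one point that needs to be said.
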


The following is a result of MacWilliams \cite{jessieMW}. It was already used in
\cite[Theorem~1]{luisAMEN2}.

\begin{lemma}
\label{jessie}
The only circulant, symmetric, and orthogonal matrix over the binary field $\F_2$ of order $n>2$ is the identity matrix $I_n$.
\end{lemma}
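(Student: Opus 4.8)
The plan is to pass from matrices to the \emph{representer} polynomial. I identify a circulant $C=\circm(b_0,\dots,b_{n-1})$ over $\F_2$ with the class of $c(x):=\sum_{j=0}^{n-1}b_j x^{j}$ in the ring $R:=\F_2[x]/(x^n-1)$. Under this dictionary, matrix multiplication becomes multiplication in $R$, the identity matrix becomes $1$, and the transpose becomes the involution $\sigma\colon x\mapsto x^{-1}=x^{n-1}$. Consequently the two hypotheses read: \emph{symmetric} means $c=\sigma(c)$, and \emph{orthogonal} ($CC^{*}=I$) means $c\,\sigma(c)=1$. Combining them gives the single equation
\[
c(x)^2 = 1 \quad\text{in } R,
\]
so the problem becomes: classify the $\sigma$-fixed square roots of unity in $R$ and show only $c=1$ survives for $n>2$.

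Next I would exploit the Frobenius endomorphism. In characteristic $2$ one has $c(x)^2=\sum_j b_j x^{2j}$, which after reduction modulo $x^n-1$ is controlled by the doubling map $j\mapsto 2j \bmod n$ on $\Z/n\Z$. When $n$ is \emph{odd} this map is a bijection whose only fixed point is $0$; hence $c(x)^2=1=x^0$ forces $b_0=1$ and $b_j=0$ for all $j\neq 0$, i.e. $C=I_n$. As a consistency check, evaluating at $x=1$ — equivalently applying Lemma~\ref{circulant} to the all-ones eigenvector — shows the row weight $c(1)=\sum_j b_j$ must equal $1$. This settles every odd order cleanly, using orthogonality and symmetry only through the squared identity.

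The substantive case is $n$ even, and this is where I expect the real work. Writing $n=2^a m$ with $m$ odd, we have $x^n-1=(x^m-1)^{2^a}$ over $\F_2$ and $x^m-1=\prod_i f_i$ with the $f_i$ distinct irreducibles, so the Chinese Remainder Theorem gives $R\cong\prod_i \F_2[x]/(f_i^{2^a})$. Putting $u:=c+1$, the equation $c^2=1$ becomes $u^2=0$, i.e. $u$ lies in the square-zero part of the nilradical; locally this means $u\in(f_i^{2^{a-1}})$. Thus for $a\ge 1$ the square roots of unity proliferate (already the shift-type circulants such as the $n/2$-fold shift are $\sigma$-fixed involutions), so the purely multiplicative argument of the odd case no longer finishes, and the symmetry condition $c=\sigma(c)$ must be used at full strength: $\sigma$ permutes the factors $f_i$ by passing to reciprocals, and the plan is to track exactly which $\sigma$-fixed, square-zero classes are genuinely compatible with orthogonality.

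The main obstacle, then, is precisely the $2$-adic part of $n$: once $4\mid n$ the doubling map is far from injective and the nilpotents abound, so one cannot conclude multiplicatively and must extract extra rigidity from the palindromic symmetry $b_k=b_{n-k}$ interacting with the near-periodicity $b_\ell=b_{\ell+n/2}$ that $c^2=1$ imposes. I would attack this by an induction that strips one factor of $2$ from $n$ at a time, folding the order-$n$ problem onto an order-$(n/2)$ problem on the collapsed sequence and carefully carrying the symmetry through each folding step, until the odd base case already handled above is reached. Controlling the symmetry across the folds — and correctly accounting for the shift-type circulants that appear at each even stage — is the delicate point and the crux of MacWilliams' result.
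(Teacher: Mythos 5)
First, a point of reference: the paper itself gives no proof of this lemma --- it is quoted from MacWilliams \cite[Corollary~1.8]{jessieMW} --- so your attempt can only be judged against the statement itself. Your dictionary (circulant $\leftrightarrow$ class of $c(x)$ in $\F_2[x]/(x^n-1)$, transpose $\leftrightarrow$ $\sigma\colon x\mapsto x^{-1}$), the reduction of ``symmetric and orthogonal'' to the single equation $c(x)^2=1$, and the doubling-map argument for odd $n$ are all correct. (For odd $n$ one can finish even faster: $x^n-1$ is then squarefree over $\F_2$, so the ring is reduced and $(c+1)^2=0$ forces $c=1$.)

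The genuine gap is the even case, and it is not a gap that can be closed: the statement as printed, with no parity restriction on $n>2$, is \emph{false} for every even $n$, and the object you exhibit yourself is the counterexample. The $n/2$-fold shift $\pi^{n/2}$, i.e.\ $c(x)=x^{n/2}$, is circulant, symmetric (since $\sigma(x^{n/2})=x^{n-n/2}=x^{n/2}$), and orthogonal, yet it is not $I_n$; for $n=4$ this is $\circm(0,0,1,0)$. Your plan to ``track exactly which $\sigma$-fixed, square-zero classes are genuinely compatible with orthogonality'' rests on a confusion: orthogonality was already used in full when you derived $c^2=1$ --- for a symmetric matrix $C$, orthogonality is \emph{equivalent} to $C^2=I$ --- so there is no residual condition left that could eliminate these shifts, and no induction on the $2$-part of $n$ can succeed. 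What you should have done upon noticing these ``$\sigma$-fixed involutions'' is test them against the claim and conclude that the lemma requires $n$ odd (which is presumably the content of MacWilliams' actual result); with that hypothesis added, your first two paragraphs already constitute a complete proof and the folding machinery is unnecessary.
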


The next lemma (see \cite[Lemma~8.6]{Serre}) is frequently used in the theory of group representations. Here it is useful for the proof of part~(d) of Theorem~\ref{HadBM}.

\begin{lemma}
\label{cauchyschw}
Let $c_1, \ldots, c_{\ell}$ be $\ell$ complex numbers of absolute value $1$.
If
\[
\vert c_{1}+\cdots + c_{\ell} \vert = \ell,
\]
then $c_1 = \cdots = c_{\ell}$.
\end{lemma}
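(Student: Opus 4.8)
The plan is to recognize the hypothesis as the equality case of the triangle inequality and to extract the rigidity from it. Since each $c_j$ has modulus $1$, the triangle inequality gives $\vert c_1 + \cdots + c_\ell\vert \le \vert c_1\vert + \cdots + \vert c_\ell\vert = \ell$, so the assumption $\vert c_1 + \cdots + c_\ell\vert = \ell$ places us exactly on the boundary, and the goal is to deduce that all the summands point in the same direction.

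First I would normalize by a rotation. Because the sum has modulus $\ell$, I can choose a unit complex number $\omega$ with $c_1 + \cdots + c_\ell = \ell\,\omega$. Multiplying by $\overline{\omega}$ and taking real parts yields
\[
\ell = \mathrm{Re}\bigl(\overline{\omega}(c_1 + \cdots + c_\ell)\bigr) = \sum_{j=1}^{\ell} \mathrm{Re}(\overline{\omega}\,c_j).
\]
Each term obeys $\mathrm{Re}(\overline{\omega}\,c_j) \le \vert \overline{\omega}\,c_j\vert = 1$, so a sum of $\ell$ quantities each at most $1$ can equal $\ell$ only if every one of them equals $1$; that is, $\mathrm{Re}(\overline{\omega}\,c_j) = 1$ for all $j$.

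To finish, I would invoke the elementary fact that a complex number $z$ with $\vert z\vert = 1$ and $\mathrm{Re}(z) = 1$ must equal $1$, since then $\vert \mathrm{Im}(z)\vert^2 = 1 - 1 = 0$. Applying this to $z = \overline{\omega}\,c_j$ gives $\overline{\omega}\,c_j = 1$, hence $c_j = \omega$, for every $j$; in particular $c_1 = \cdots = c_\ell$.

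There is no genuine obstacle here: the argument is elementary and self-contained, which is appropriate since the lemma is only a tool for part~(d) of Theorem~\ref{HadBM}. The only point requiring a moment's care is the equality analysis, which I have arranged as $\mathrm{Re}(z) = 1 = \vert z\vert$ so that the conclusion $z = 1$ is immediate. As an alternative one could avoid the rotation altogether by using the Lagrange-type identity $\sum_{j,k} \vert c_j - c_k\vert^2 = 2\ell \sum_j \vert c_j\vert^2 - 2\,\vert\sum_j c_j\vert^2$, which under the hypotheses equals $2\ell^2 - 2\ell^2 = 0$ and therefore forces $c_j = c_k$ for all $j,k$.
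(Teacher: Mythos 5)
Your proof is correct, but note that the paper does not actually prove this lemma: it states it as a known result and cites \cite[Lemma~8.6]{Serre}, so there is no internal argument to compare against. What you have written is the standard rigidity analysis of the equality case of the triangle inequality, which is surely the content of the cited result: normalize by a unit $\omega$ with $c_1+\cdots+c_\ell=\ell\omega$, take real parts to get $\sum_j \mathrm{Re}(\overline{\omega}c_j)=\ell$ with each term at most $1$, force every term to equal $1$, and conclude from $\vert z\vert=1$, $\mathrm{Re}(z)=1$ that $z=1$. Each step is sound, including the implicit use of $\ell\geq 1$ to define $\omega$ (for $\ell=0$ the statement is vacuous). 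Your alternative via the identity $\sum_{j,k}\vert c_j-c_k\vert^2 = 2\ell\sum_j\vert c_j\vert^2 - 2\,\vert\sum_j c_j\vert^2$ is also correct and is in some ways preferable: it reads off the conclusion $c_j=c_k$ for all $j,k$ in one line without any normalization. Either version makes the paper more self-contained than the bare citation it currently relies on.
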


The following lemma gives some properties of certain circulant matrices of rank~$1$.

\begin{lemma}
\label{circrank1}
Let $C := \circm(c_{1},\ldots,c_{2k})$ be a circulant matrix of order $2k$ and rank~$1$ such that $c_j^2=1$ for all $j=1,\ldots,2k$.
Then for all $s=1,\ldots,k$ we have
$c_{2s-1}=a$ and $c_{2s}=b$, with $\{a,b\} \subseteq \{-1,1\}$. Moreover, if $a \neq b$ then
\[
c_1+c_2+\cdots+c_{2k}=0.
\]
\end{lemma}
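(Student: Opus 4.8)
The plan is to exploit the rank-$1$ hypothesis together with the fact that every entry lies in $\{-1,1\}$ and is therefore nonzero. First I would observe that since each $c_j^2 = 1$, the matrix $C$ is nonzero, so its row space is one-dimensional and is spanned by the first row $R_1 = [c_1,\ldots,c_{2k}]$. By the circulant convention recalled in the introduction, the second row is the cyclic shift $R_2 = [c_{2k}, c_1, \ldots, c_{2k-1}]$, and rank $1$ forces $R_2 = \lambda R_1$ for some scalar $\lambda$ (necessarily real, as both vectors are real).

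Next I would pin down $\lambda$. Comparing the second coordinates of $R_2 = \lambda R_1$ gives $c_1 = \lambda c_2$, whence $\lambda = c_1/c_2 \in \{-1,1\}$ since $c_1, c_2 \in \{-1,1\}$. Comparing all coordinates yields the uniform relation $c_{j-1} = \lambda c_j$ (indices read cyclically, with $c_0 := c_{2k}$), and hence $c_j = \lambda^{\,j-1} c_1$ for every $j$. Two cases then arise. If $\lambda = 1$, all entries coincide and the claim holds with $a = b = c_1$. If $\lambda = -1$, then $c_j = (-1)^{j-1} c_1$, so the odd-indexed entries all equal $c_1$ and the even-indexed entries all equal $-c_1$. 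Setting $a := c_1$ and $b := c_2$ in both cases delivers $c_{2s-1} = a$ and $c_{2s} = b$ with $\{a,b\} \subseteq \{-1,1\}$, as required.

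For the ``moreover'' assertion I observe that $a \neq b$ can occur only in the second case, where $\lambda = -1$ and $b = -a$. Pairing consecutive entries then gives
\[
c_1 + c_2 + \cdots + c_{2k} = \sum_{s=1}^{k} (c_{2s-1} + c_{2s}) = k(a+b) = 0.
\]

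I do not anticipate a serious obstacle; the only point meriting care is the verification that the proportionality constant is $\pm 1$ and that, in the case $\lambda = -1$, the alternating pattern closes up consistently around the cycle. This is precisely where the evenness of the order $2k$ enters, since $c_{2k} = \lambda^{2k-1} c_1 = (-1)^{2k-1} c_1 = -c_1$ agrees with the wrap-around relation $c_{2k} = \lambda c_1 = -c_1$. (Alternatively one could argue via the representer polynomial, rank $1$ meaning that $c_1 + c_2 x + \cdots + c_{2k} x^{2k-1}$ vanishes at all but one of the $2k$-th roots of unity, from which the same two patterns emerge; but the elementary row comparison above is shorter and self-contained.)
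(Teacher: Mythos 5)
Your proof is correct and follows essentially the same route as the paper's: rank $1$ forces the second row to be $\pm 1$ times the first (the scalar being $\pm 1$ since all entries are $\pm 1$), from which the constant or alternating pattern follows and the sum vanishes when $a \neq b$. Your version is slightly more explicit about the proportionality constant $\lambda$, the formula $c_j = \lambda^{j-1}c_1$, and the wrap-around consistency, but the underlying argument is identical.
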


\begin{proof}
Since $\rank(C)=1$, the second row $R_2$ of $C$ is a multiple of the first row $R_1$. As each entry of $C$ is $1$ or $-1$, there are two cases to consider:  
(a) $R_1=R_2$ and (b) $R_1 = -R_2$.  

In case~(a), comparing the first entries of $R_1$ and $R_2$ gives $c_{2k}=c_{1}$, and comparing the others shows $c_1=c_{2}=c_{3}= \cdots = c_{2k-1}$. Thus all entries of $R_1$ are equal, so $a=b \in \{-1,1\}$.  

In case~(b), a similar argument gives the stated result.  

If $a \neq b$, then $\{a,b\} =\{-1,1\}$, so $c_1+c_2+\cdots+c_{2k}=a+b =0$.
\end{proof}

The following lemma gives some properties of certain circulant matrices of rank~$2$.

\begin{lemma}
\label{circrank2}
Let $D := \circm(d_{1},\ldots,d_{2k})$ be a circulant matrix of order $2k \geq 6$ and rank~$2$ such that $d_j^2=1$ for all $j=1,\ldots,2k$. Assume that the first two rows of $D$,
\[
v = (d_1,d_2,d_3,\ldots,d_{2k-2},d_{2k-1},d_{2k}),
\]
and
\[
w =(d_{2k},d_1,d_2,\ldots,d_{2k-3},d_{2k-2},d_{2k-1}),
\]
are $\Q$-linearly independent. Let $u = a v + b w$ be any row of $D$, where $a,b \in \Q$. Assume that
\[
s := d_1+d_2+\cdots + d_{2k-1}+d_{2k} \neq 0.
\]
Then $a+b = 1$.
\end{lemma}

\begin{proof}
Without loss of generality, we may assume that $u$ is the third row of $D$, so that
\begin{equation}
\label{row3}
u = (d_{2k-1},d_{2k},d_1,\ldots,d_{2k-4},d_{2k-3},d_{2k-2}).
\end{equation}
Since $u=av+bw$, with indices taken modulo $2k$, we have for all $j=1,\ldots,2k$:
\begin{equation}
\label{djis}
d_j = a\, d_{j+2} + b\, d_{j+1}.
\end{equation}
Summing over all $j$ in \eqref{djis} gives
\begin{equation}
\label{eses}
s = a\, s + b\, s.
\end{equation}
As $s \neq 0$, equation~\eqref{eses} implies that $a+b=1$, as claimed.
\end{proof}

\begin{lemma}
\label{klmrank2}
Let $h_k,h_{\ell},h_m \in \{-1,1\}$. Assume that for some rational numbers $a,b \in \Q$ satisfying
\begin{equation}
\label{ab1}
a+b = 1,
\end{equation}
we have
\begin{equation}
\label{abklm}
h_k = a\, h_{\ell} + b\, h_m.
\end{equation}
Then either
\begin{equation}
\label{abklm1}
h_k =  h_{\ell} = h_m,
\end{equation}
or
\begin{equation}
\label{abklm2}
(a,b) \in \{(0,1),(1,0)\}.
\end{equation}
\end{lemma}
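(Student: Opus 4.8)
The plan is to eliminate $b$ using the constraint \eqref{ab1} and reduce \eqref{abklm} to a single linear relation, then settle the two conclusions by a short case analysis on whether $h_\ell$ and $h_m$ coincide. Substituting $b = 1-a$ into \eqref{abklm} gives $h_k = a\,h_\ell + (1-a)\,h_m$, which rearranges to
\[
h_k - h_m = a\,(h_\ell - h_m).
\]
First I would treat the case $h_\ell = h_m$: the right-hand side vanishes, forcing $h_k = h_m$, and hence $h_k = h_\ell = h_m$, which is exactly \eqref{abklm1}.

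In the complementary case $h_\ell \neq h_m$, both values lie in $\{-1,1\}$, so $\{h_\ell,h_m\} = \{-1,1\}$ and $h_\ell - h_m = \pm 2 \neq 0$. I would then solve for the unique rational
\[
a = \frac{h_k - h_m}{h_\ell - h_m},
\]
and evaluate it over the finitely many admissible sign patterns for $(h_k,h_\ell,h_m)$. Since $h_k \in \{h_\ell,h_m\}$ in every such pattern, the numerator is either $0$ or equal to the denominator, so $a \in \{0,1\}$ and correspondingly $b = 1-a \in \{1,0\}$. This yields $(a,b) \in \{(0,1),(1,0)\}$, which is \eqref{abklm2}, completing the argument.

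Because the proof is essentially a finite verification, there is no substantial obstacle. The only point requiring care is to confirm that the rationality hypothesis on $a,b$ does not open additional possibilities: once $h_\ell \neq h_m$ the relation determines $a$ uniquely, and that value is automatically rational (indeed an integer in $\{0,1\}$), so the rationality assumption is consistent and introduces no further branching.
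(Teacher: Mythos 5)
Your proof is correct and is essentially the same elementary argument as the paper's: both reduce \eqref{abklm} via $a+b=1$ to a linear relation and exploit that $h_\ell - h_m = \pm 2$ when the two signs differ. The only difference is the (dual) organization of cases — the paper splits on whether $ab=0$ and deduces $h_k=h_\ell=h_m$ by contradiction, while you split on whether $h_\ell=h_m$ and solve for $a$ directly — which changes nothing of substance.
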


\begin{proof}
Assume first that $ab \neq 0$.  
It suffices to prove that $h_{\ell} = h_m$, since if this holds then \eqref{abklm} and \eqref{ab1} give
\begin{equation}
\label{elem}
h_k =  (a+b) h_{\ell} = h_{\ell}.
\end{equation}
To prove $h_{\ell} = h_m$, assume without loss of generality that
\begin{equation}
\label{kun}
 h_{\ell} = 1, \quad h_m = -1.
\end{equation}
From \eqref{abklm} we obtain
\begin{equation}
\label{aminusb}
h_k = a - b.
\end{equation}
If $h_k=1$, then together with \eqref{ab1} we find $b=0$, a contradiction to $ab \neq 0$.  
Similarly, if $h_k = -1$ we find $a=0$, again a contradiction.  
Thus $h_{\ell}=h_m$.  

If $ab=0$, then \eqref{ab1} implies \eqref{abklm2}.
\end{proof}

\begin{lemma}
\label{uu}
Let $C = \circm(c_1,\ldots,c_m)$ be a circulant matrix such that two consecutive rows are equal. Then
\begin{equation}
\label{alleq}
c_1 = \ldots = c_m.
\end{equation}
\end{lemma}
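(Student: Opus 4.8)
The plan is to exploit the single structural fact that defines a circulant matrix: each row is the cyclic shift by one position of the row directly above it. Consequently, the hypothesis that two consecutive rows coincide is exactly the statement that some row $R_k$ of $C$ equals its own cyclic shift by one position. Writing $R_k = [r_1, r_2, \ldots, r_m]$, its successor is $R_{k+1} = [r_m, r_1, r_2, \ldots, r_{m-1}]$, and the entries $r_1, \ldots, r_m$ form a cyclic rearrangement of $c_1, \ldots, c_m$. So it suffices to show that a vector invariant under the one-step cyclic shift must be constant.

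First I would equate $R_k$ and $R_{k+1}$ coordinate by coordinate. Comparing the $j$-th entries for $j = 2, \ldots, m$ yields the chain $r_j = r_{j-1}$, which immediately forces $r_1 = r_2 = \cdots = r_m$; the remaining comparison in the first coordinate, namely $r_1 = r_m$, is then automatically consistent and contributes no new information. Since $(r_1, \ldots, r_m)$ is a cyclic permutation of $(c_1, \ldots, c_m)$, equality of all the $r_j$ gives $c_1 = c_2 = \cdots = c_m$, which is precisely \eqref{alleq}. This is essentially the same entry-by-entry comparison already used in the proof of Lemma~\ref{circrank1}, carried out here without the sign restriction $c_j^2 = 1$.

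One convenient simplification is that, thanks to the circulant structure, there is no loss of generality in taking the two equal consecutive rows to be the first two, $R_1$ and $R_2$; the argument above then applies verbatim with $r_j = c_j$. I do not anticipate any genuine obstacle: the statement reduces to the elementary observation that the only fixed points of a nontrivial cyclic shift of a finite tuple are the constant tuples. The only point requiring minor care is the bookkeeping of indices modulo $m$, so that the wrap-around comparison $r_1 = r_m$ is correctly recognized as redundant rather than as an additional constraint.
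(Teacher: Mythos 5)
Your proposal is correct and coincides with the paper's own proof: both reduce without loss of generality to the case where the first two rows are equal and then read off the chain of equalities $c_j = c_{j-1}$ (indices modulo $m$) from the coordinate-wise comparison. Your added remarks about the redundancy of the wrap-around equation are accurate but not needed beyond what the paper already records.
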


\begin{proof}
Without loss of generality, assume that the first two rows of $C$ are equal. Then we have $c_j = c_{j-1}$ for all $j$, with indices taken modulo $m$. This proves the claim.
\end{proof}

\begin{lemma}
\label{uvuu}
Let $C = \circm(c_1,\ldots,c_{2k})$ be a circulant matrix with entries in $\{-1,1\}$, of order $2k \geq 6$, such that the first row of $C$ is equal to the third. Let $s = c_1 + \cdots + c_{2k}$. Then
\[
s \in \{0, 2k, -2k\}.
\]
\end{lemma}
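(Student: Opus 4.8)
The plan is to use the hypothesis $R_1 = R_3$ to extract a two-periodicity on the defining sequence of $C$ and then finish by a direct count. First I would write the third row of $C$ explicitly as the cyclic shift of the first row by two positions, namely $R_3 = (c_{2k-1}, c_{2k}, c_1, \ldots, c_{2k-2})$, so that the $j$-th entry of $R_3$ is $c_{j-2}$ with indices read modulo $2k$. Equating $R_1$ and $R_3$ entrywise then yields $c_j = c_{j-2}$ for every $j = 1, \ldots, 2k$.

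The next step is to observe that this recurrence ties together all entries whose indices share a parity: the odd-indexed entries $c_1, c_3, \ldots, c_{2k-1}$ are all equal to a common value $a$, and the even-indexed entries $c_2, c_4, \ldots, c_{2k}$ are all equal to a common value $b$, with $a, b \in \{-1,1\}$ because the entries of $C$ lie in $\{-1,1\}$. This is the same odd/even splitting that appears in the proof of Lemma~\ref{circrank1}. Since the order of $C$ is $2k$, there are exactly $k$ indices of each parity.

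Finally I would compute the row sum directly. We obtain $s = ka + kb = k(a+b)$, and since $a$ and $b$ each lie in $\{-1,1\}$ the quantity $a+b$ can only equal $-2$, $0$, or $2$ (the value $0$ occurring precisely when $a \neq b$). Hence $s \in \{-2k, 0, 2k\}$, which is the claim.

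There is no genuine obstacle here: the substantive point is the elementary equivalence between equality of the first and third rows of a circulant matrix and two-periodicity of its defining sequence, after which the conclusion is a one-line count. The only step needing a little care is the index bookkeeping modulo $2k$ and the verification that each parity class contains exactly $k$ entries; the argument is otherwise uniform in $k$, and the hypothesis $2k \geq 6$ serves only to place us in the regime relevant to the later results.
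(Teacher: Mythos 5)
Your proposal is correct and follows essentially the same argument as the paper: equate the first and third rows to get the two-periodicity $c_j = c_{j-2}$ (indices mod $2k$), deduce that each parity class of indices carries a constant value in $\{-1,1\}$, and count to obtain $s = k(a+b) \in \{0, 2k, -2k\}$. No meaningful differences to report.
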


\begin{proof}
Comparing the first and third rows of $C$ gives $c_j = c_{j-2}$ for all $j$ (indices modulo $2k$). Hence, all odd-indexed $c_j$ are equal to some $c \in \{-1,1\}$, and all even-indexed $c_j$ are equal to some $d \in \{-1,1\}$. Therefore, $s = k(c+d)$, which yields $s \in \{0, 2k, -2k\}$. This proves the claim.
\end{proof}

The following two Lemmas are well known; see, e.g., \cite[p.~1193]{HWallis}, \cite[p.~234]{Meisner}, \cite[pp.~329--330]{Turyn}.

\begin{lemma}
\label{regular}
Let $H$ be a regular Hadamard matrix of order $n \geq 4$. Then $n = 4h^2$ for some positive integer $h$. Moreover, if $H$ is circulant then $h$ is odd. Furthermore, either $H$ or $-H$ is $2h$-regular (the other being $(-2h)$-regular). In the $2h$-regular case, each row has $2h^2 + h$ positive entries and $2h^2 - h$ negative entries; in the $(-2h)$-regular case, each row has $2h^2 - h$ positive entries and $2h^2 + h$ negative entries.
\end{lemma}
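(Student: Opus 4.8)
The plan is to treat the three assertions separately: the order formula $n=4h^{2}$, the sign of the regularity, and the entry counts are elementary consequences of the defining relation $HH^{*}=nI$, whereas the claim that $h$ is odd in the circulant case is the genuinely deeper, number-theoretic part and will be the main obstacle.

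\smallskip
\noindent\emph{Order and regularity.} Write $\mathbf{1}=[1,\ldots,1]^{T}$. Regularity means every row of $H$ has the same sum $r$, i.e. $H\mathbf{1}=r\mathbf{1}$. Since $H$ is Hadamard we have $H^{*}H=HH^{*}=nI$, so left-multiplying $H\mathbf{1}=r\mathbf{1}$ by $H^{*}$ gives $n\mathbf{1}=r\,H^{*}\mathbf{1}$; as $r\neq 0$ this shows the column sums are all equal to $n/r$. Computing the squared norm of $H\mathbf{1}$ in two ways,
\[
r^{2}n=(H\mathbf{1})^{*}(H\mathbf{1})=\mathbf{1}^{*}H^{*}H\mathbf{1}=n\,\mathbf{1}^{*}\mathbf{1}=n^{2},
\]
forces $r^{2}=n$, and then $n/r=r$, so the common column sum is also $r$. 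It is well known that a Hadamard matrix of order $n\geq 4$ has $4\mid n$; combined with $n=r^{2}$ this forces $r$ to be even, say $r=\pm 2h$ with $h>0$. Replacing $H$ by $-H$ negates every row sum, so exactly one of $H,-H$ is $2h$-regular and the other $(-2h)$-regular, and in either case $n=r^{2}=4h^{2}$.

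\smallskip
\noindent\emph{Entry counts.} Fix a row and let $p,q$ be its numbers of $+1$ and $-1$ entries. Then $p+q=n=4h^{2}$ and $p-q=r$. Solving, in the $2h$-regular case ($r=2h$) one gets $p=2h^{2}+h$ and $q=2h^{2}-h$, and in the $(-2h)$-regular case the two counts are interchanged; regularity guarantees this holds simultaneously for every row.

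\smallskip
\noindent\emph{The circulant case.} A circulant matrix has all row sums equal to the sum of its first row, so a circulant Hadamard matrix is automatically regular and the above applies, giving $n=4h^{2}$. The remaining assertion, that $h$ must then be odd, is Turyn's theorem \cite{Turyn} and is where all the difficulty lies. The route I would take is to translate the vanishing periodic autocorrelation of the first row (which is equivalent to the orthogonality of the rows of $H$) into the statement that the set $D\subseteq\Z/n\Z$ of positions carrying $-1$ is a cyclic Hadamard (Menon) difference set with parameters $(4h^{2},\,2h^{2}-h,\,h^{2}-h)$ and order $k-\lambda=h^{2}$; one then studies the norm equation $R(\zeta)\,\overline{R(\zeta)}=n$ in $\Z[\zeta]$, where $\zeta$ is a primitive $n$-th root of unity and $R$ is the representer polynomial, and extracts the parity of $h$ from a $2$-adic (self-conjugacy) analysis of this equation. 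This number-theoretic step is the crux; the order formula and the entry counts, by contrast, are purely linear-algebraic and immediate.
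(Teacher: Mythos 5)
The paper does not actually prove this lemma: it is stated as well known, with pointers to \cite{HWallis}, \cite{Meisner}, and \cite{Turyn}, so there is no internal argument to compare yours against. Your linear-algebraic treatment of the elementary assertions is correct and complete: $H\mathbf{1}=r\mathbf{1}$ together with $H^{*}H=nI$ gives $r^{2}=n$ (and $r\neq 0$ since $H$ is invertible), evenness of $r$ then yields $n=4h^{2}$, and the counts $2h^{2}\pm h$ follow from solving $p+q=n$, $p-q=r$; the dichotomy between $H$ and $-H$ is immediate. (A small simplification: you do not need the full strength of $4\mid n$ here; orthogonality of any two distinct rows already forces $n$ even, and $n=r^{2}$ even forces $r$ even.) This is more than the paper itself provides. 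The one place where your text is a plan rather than a proof is the assertion that $h$ is odd in the circulant case: you correctly identify this as Turyn's theorem, with the right difference-set parameters $(4h^{2},\,2h^{2}-h,\,h^{2}-h)$ and the right mechanism (self-conjugacy of $2$ applied to the norm equation $R(\zeta)\overline{R(\zeta)}=n$ in $\Z[\zeta]$), but you do not carry out the $2$-adic analysis, and that analysis is the only genuinely deep content of the lemma. Since the paper likewise defers exactly this point to \cite[pp.~329--330]{Turyn}, your write-up is at the same level of completeness as the paper's treatment; just be aware that, read as a self-contained proof, the parity claim remains unproved.
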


\begin{lemma}
\label{eigens}
Let $H$ be a circulant Hadamard matrix of order $n$, let $w = \exp(2\pi i / n)$, and let $R(x)$ be its representer polynomial. Then:
\begin{itemize}
\item[\rm(a)]
All eigenvalues $R(s)$ of $H$, where $s \in \{1, w, w^2, \ldots, w^{n-1}\}$, satisfy
\[
|R(s)| = \sqrt{n}.
\]
\item[\rm(b)]
The vector $v := [1,\ldots,1] \in \mathbb{R}^n$ is an eigenvector of $H$ with associated eigenvalue $\lambda = \sqrt{n}$.
\end{itemize}
\end{lemma}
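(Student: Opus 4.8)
The plan is to exploit the standard fact that every circulant matrix is diagonalized by the Fourier vectors, and then to read off the moduli of the eigenvalues directly from the Hadamard orthogonality relation $H^{T}H = nI_n$.

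For part~(a), I would first check by direct computation that for each $k \in \{0,1,\ldots,n-1\}$ the Fourier vector $f_k := [1,w^{k},w^{2k},\ldots,w^{(n-1)k}]^{T}$ is an eigenvector of $H$ with eigenvalue $R(w^{k})$. Indeed, the first coordinate of $Hf_k$ is $\sum_{\ell=1}^{n} h_\ell\, w^{(\ell-1)k} = R(w^{k})$, and the cyclic-shift structure of the circulant $H$ forces the $j$-th coordinate of $Hf_k$ to equal $w^{(j-1)k} R(w^{k})$; hence $Hf_k = R(w^{k}) f_k$. Since every $s\in\{1,w,\ldots,w^{n-1}\}$ has the form $w^{k}$, this produces all the eigenvalues $R(s)$.

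Next I would compute $\|Hf_k\|^{2}$ in two ways. On one side, $\|Hf_k\|^{2} = |R(w^{k})|^{2}\,\|f_k\|^{2} = n\,|R(w^{k})|^{2}$, using that each entry of $f_k$ is a root of unity, so that $\|f_k\|^{2}=n$. On the other side, since $H$ is Hadamard the matrix $H/\sqrt{n}$ is orthogonal, whence $H^{T}H = nI_n$ (equivalently $H^{*}H=nI_n$, as $H$ is real), and therefore $\|Hf_k\|^{2} = f_k^{*}H^{*}Hf_k = n\,\|f_k\|^{2} = n^{2}$. Equating the two expressions yields $|R(w^{k})|^{2}=n$, i.e. $|R(s)|=\sqrt{n}$ for every $s$, which is~(a).

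For part~(b), the vector $v=[1,\ldots,1]$ is precisely the Fourier vector $f_0$ associated with $s=1$, so by Lemma~\ref{circulant} it is an eigenvector of $H$ with eigenvalue $\lambda = h_1+\cdots+h_n = R(1)$, and part~(a) gives $|\lambda|=\sqrt{n}$. As $\lambda$ is the (common) integer row sum of $H$, it equals $\pm\sqrt{n}$; replacing $H$ by $-H$ if necessary (both are circulant Hadamard matrices, and this is exactly the normalization already isolated in Lemma~\ref{regular}), we may take $\lambda=\sqrt{n}$. I expect no genuine obstacle here: the only point requiring care is this choice of sign in~(b), while everything else is a routine consequence of the circulant (Fourier) diagonalization together with the orthogonality $H^{T}H=nI_n$.
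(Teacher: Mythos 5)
Your proof is correct, but note that the paper itself offers no proof of Lemma~\ref{eigens} at all: it is stated as well known, with pointers to the literature (Hedayat--Wallis, Meisner, Turyn). So your argument is not an alternative to anything in the paper; it is a self-contained substitute for the citation, and a standard, clean one: the Fourier vectors $f_k$ diagonalize any circulant, giving the eigenvalues $R(w^k)$, and computing $\|Hf_k\|^2$ two ways against $H^{T}H=nI_n$ yields $|R(w^k)|^2=n$. Every step there checks out. The one point worth emphasizing is the sign issue in~(b), which you correctly flagged: as literally stated, the lemma is false for, say, $H_3=\circm(1,-1,-1,-1)$, whose row sum is $-2=-\sqrt{4}$, so the honest conclusion is $\lambda\in\{\sqrt{n},-\sqrt{n}\}$, with $\lambda=\sqrt{n}$ only after the normalization $H\mapsto -H$ you invoke (the one isolated in Lemma~\ref{regular}). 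This reading is consistent with how the paper actually uses the lemma: in the proof of Lemma~\ref{misscase} it invokes Lemma~\ref{eigens} to conclude $\lambda_1+\lambda_2\in\{\sqrt{n},-\sqrt{n}\}$, not $\lambda_1+\lambda_2=\sqrt{n}$. So your proposal both proves the statement (in its correct form) and pinpoints the imprecision in its formulation.
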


The following lemma is important for the proof of the theorems.

\begin{lemma}
\label{graphR}
Let $H := \circm(h_1,\ldots,h_n)$ be a circulant Hadamard matrix, let $E_1 := \circm(h_1,h_3,h_5,\ldots,h_{n-1})$, and $E_2 := \circm(h_2,h_4,h_6,\ldots,h_{n})$. Then
\begin{equation}
\label{mainx}
\sum_{\substack{j \neq 1 \\ 1 \leq j \leq n/2}} \langle R_{1}, R_{j} \rangle
+ \sum_{\substack{j \neq 1 \\ 1 \leq j \leq n/2}} \langle S_{1}, S_{j} \rangle
= \sum_{\substack{j \neq 1 \\ 1 \leq j \leq n}} \langle T_{1}, T_{j} \rangle = 0,
\end{equation}
where $R_{j}$, $S_{j}$, and $T_{j}$ denote the $j$-th row of $E_1$, $E_2$, and $H$, respectively.
\end{lemma}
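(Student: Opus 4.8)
My plan is to reduce the entire statement to the orthogonality of the rows of a Hadamard matrix. Since $H$ has real entries, $H^{*}=H^{T}$ and $HH^{T}=nI_{n}$; reading off the $(1,j)$ entry gives $\langle T_{1},T_{j}\rangle = 0$ for every $j\neq 1$. In particular each summand on the right of \eqref{mainx} vanishes, so $\sum_{j\neq 1}\langle T_{1},T_{j}\rangle = 0$, which disposes of the second equality and of the value $0$. Alternatively one could obtain this from Lemma~\ref{circulant} and Lemma~\ref{eigens}: writing $\sigma:=h_{1}+\cdots+h_{n}$, the all-ones vector $v$ is an eigenvector, so $\sum_{j=1}^{n}T_{j}=\sigma v$ and $\sum_{j=1}^{n}\langle T_{1},T_{j}\rangle=\sigma\langle T_{1},v\rangle=\sigma^{2}=n$, whence $\sum_{j\neq 1}\langle T_{1},T_{j}\rangle=\sigma^{2}-n=0$.

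The substance of the lemma is therefore the first equality, namely that the $E_{1}$- and $E_{2}$-contributions reassemble into autocorrelations of $H$. Write $n=2k$ and index $h$ periodically modulo $n$, so that $(T_{j})_{c}=h_{c-(j-1)}$ and $\langle T_{1},T_{j}\rangle=\sum_{c=1}^{n}h_{c}h_{c-(j-1)}$. The first row $T_{1}$ splits into its odd-position part, which is exactly $R_{1}=(h_{1},h_{3},\ldots,h_{n-1})$, and its even-position part $S_{1}=(h_{2},h_{4},\ldots,h_{n})$. The key observation is that a cyclic shift of $H$ by an \emph{even} amount $2t$ preserves the parity of the column indices, and that on the odd (resp.\ even) columns it acts precisely as the shift by $t$ that produces the rows of $E_{1}$ (resp.\ $E_{2}$). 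Making the substitutions $c=2s-1$ and $c=2s$ and reducing modulo $k$, one checks
\begin{equation}
\langle R_{1},R_{t+1}\rangle=\sum_{c\ \mathrm{odd}}h_{c}h_{c-2t},\qquad
\langle S_{1},S_{t+1}\rangle=\sum_{c\ \mathrm{even}}h_{c}h_{c-2t},
\end{equation}
and adding the two gives the identity
\begin{equation}
\langle R_{1},R_{t+1}\rangle+\langle S_{1},S_{t+1}\rangle=\sum_{c=1}^{n}h_{c}h_{c-2t}=\langle T_{1},T_{2t+1}\rangle .
\end{equation}

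With this identity in hand the proof is immediate: summing over $t=1,\ldots,k-1$ makes $t+1$ run through $2,\ldots,k=n/2$, so the left-hand side becomes exactly the left-hand side of \eqref{mainx}, while the right-hand side becomes $\sum_{t=1}^{k-1}\langle T_{1},T_{2t+1}\rangle$, a partial sum of $\sum_{j\neq 1}\langle T_{1},T_{j}\rangle$. By the orthogonality recorded above every term of this partial sum is $0$, so the left-hand side of \eqref{mainx} is $0$ as well, and the chain of equalities is established.

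I expect the only real obstacle to be the index bookkeeping in the decomposition identity: one must verify that the shift by $2t$ on $H$ corresponds to the shift by $t$ on both $E_{1}$ and $E_{2}$, and that the reductions modulo $n$ and modulo $k$ are compatible. This is where the evenness of $n$ is used, since even shifts fix the two parity classes of columns; once it is checked, the rest is a one-line consequence of Hadamard orthogonality. A cosmetic point to settle is the range of summation: the sums on the left of \eqref{mainx} run to $n/2=k$, which matches $t+1\le k$, i.e.\ $t\le k-1$, so no boundary term is lost.
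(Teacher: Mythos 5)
Your proposal is correct and follows essentially the same route as the paper: the paper's proof rests on exactly your key identity $\langle R_{1},R_{j}\rangle+\langle S_{1},S_{j}\rangle=\langle T_{1},T_{2j-1}\rangle$ combined with the orthogonality of distinct rows of $H$. You simply spell out the index bookkeeping that the paper leaves implicit.
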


\begin{proof}
The result follows from the equality
\begin{equation}
\label{oddeven}
\langle R_{1}, R_{j} \rangle + \langle S_{1}, S_{j} \rangle
= \langle T_{1}, T_{2j-1} \rangle,
\end{equation}
since two distinct rows of $H$ are orthogonal.
\end{proof}

\begin{lemma}
\label{misscase}
Let $H := \circm(h_1,\ldots,h_n)$ be a circulant Hadamard matrix, let $E_1 := \circm(h_1,h_3,h_5,\ldots,h_{n-1})$, and $E_2 := \circm(h_2,h_4,h_6,\ldots,h_{n})$. Let 
\[
\lambda_{1} := h_{1} + h_{3} + \cdots + h_{n-1}, \quad
\lambda_{2} := h_{2} + h_{4} + \cdots + h_{n},
\]
be the real eigenvalues of $E_1$ and $E_2$ associated with the eigenvector $v_{0} := [1,\ldots,1] \in \mathbb{R}^{n/2}$. Then $\lambda_1 \lambda_2 = 0$, so that we may assume, without loss of generality (possibly replacing $H$ by $\pi H$), that
\begin{equation}
\label{oddeven2}
h_1 + h_3 + \cdots + h_{n-1} = 0, 
\quad
|h_2 + h_4 + \cdots + h_{n}| = \sqrt{n}.
\end{equation}
\end{lemma}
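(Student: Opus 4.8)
The plan is to evaluate the representer polynomial $R(x) = h_1 + h_2 x + \cdots + h_n x^{n-1}$ of $H$ at the two real $n$-th roots of unity, $x = 1$ and $x = -1$, and to compare the resulting eigenvalues of $H$ with $\lambda_1$ and $\lambda_2$. First I would record the two algebraic identities
\[
R(1) = h_1 + h_2 + \cdots + h_n = \lambda_1 + \lambda_2,
\]
and, using that $n$ is even,
\[
R(-1) = h_1 - h_2 + h_3 - \cdots + h_{n-1} - h_n = \lambda_1 - \lambda_2.
\]
Both $1$ and $-1 = w^{n/2}$ (with $w = \exp(2\pi i/n)$) lie in the set $\{1, w, \ldots, w^{n-1}\}$, so Lemma~\ref{eigens}(a) gives $|R(1)| = |R(-1)| = \sqrt{n}$.

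Since $\lambda_1$ and $\lambda_2$ are real (each is a sum of $\pm 1$'s, and by Lemma~\ref{circulant} they are precisely the eigenvalues of $E_1$ and $E_2$ for the all-ones eigenvector $v_0$), the quantities $\lambda_1 \pm \lambda_2$ are real, and squaring the two modulus identities yields $(\lambda_1 + \lambda_2)^2 = n = (\lambda_1 - \lambda_2)^2$. Subtracting these gives $4\lambda_1\lambda_2 = 0$, hence $\lambda_1 \lambda_2 = 0$, which is the first assertion.

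For the normalization I would observe that $\lambda_1$ and $\lambda_2$ cannot both vanish, since that would force $R(1) = 0$, contradicting $|R(1)| = \sqrt{n} > 0$; thus exactly one of them is zero and the other has absolute value $\sqrt{n}$. If it is $\lambda_1$ that vanishes we already have \eqref{oddeven2}. If instead $\lambda_2 = 0$, I would replace $H$ by $\pi H = \circm(h_n, h_1, \ldots, h_{n-1})$. This matrix is again circulant Hadamard: it is a product of circulants, hence circulant, and since $\pi$ is orthogonal one has $(\pi H)(\pi H)^{*} = \pi H H^{*} \pi^{*} = n I$. A direct inspection shows that the odd-indexed entries of $\pi H$ are exactly the even-indexed entries of $H$ and vice versa, so passing from $H$ to $\pi H$ interchanges the roles of $\lambda_1$ and $\lambda_2$. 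After this reduction we obtain $\lambda_1 = 0$ and $|\lambda_2| = \sqrt{n}$, which is precisely \eqref{oddeven2}.

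I do not expect a genuine obstacle here: the single idea is to feed the second special value $x = -1$ into Lemma~\ref{eigens}(a), after which the vanishing of $\lambda_1\lambda_2$ is a one-line computation. The only points requiring a little care are verifying that $\pi H$ remains circulant Hadamard and tracking how the cyclic shift permutes the odd- and even-indexed coordinates, so that replacing $H$ by $\pi H$ really does realize the swap of $\lambda_1$ and $\lambda_2$ needed for the stated normalization.
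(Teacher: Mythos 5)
Your proof is correct, but it takes a genuinely different route from the paper's. The paper never evaluates the representer polynomial at $-1$: instead it invokes Lemma~\ref{graphR} (the row-orthogonality identity $\langle R_1,R_j\rangle + \langle S_1,S_j\rangle = \langle T_1,T_{2j-1}\rangle$) together with the Gram-type computation $a + n/2 = \lambda_1^2$, $b + n/2 = \lambda_2^2$, $a+b=0$, to obtain $\lambda_1^2 + \lambda_2^2 = n$; combining this with $\lambda_1 + \lambda_2 = \pm\sqrt{n}$ from Lemma~\ref{eigens}(b) then yields $2\lambda_1\lambda_2 = 0$. You bypass Lemma~\ref{graphR} entirely and extract the same information spectrally: since $n$ is even, $-1 = w^{n/2}$ is an admissible evaluation point, so Lemma~\ref{eigens}(a) gives $|\lambda_1 - \lambda_2| = |R(-1)| = \sqrt{n}$ alongside $|\lambda_1 + \lambda_2| = |R(1)| = \sqrt{n}$, and subtracting the squares gives $4\lambda_1\lambda_2 = 0$. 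The two arguments carry equivalent content (given $(\lambda_1+\lambda_2)^2 = n$, the paper's identity $\lambda_1^2+\lambda_2^2 = n$ is the same as your $(\lambda_1-\lambda_2)^2 = n$), but yours is arguably more self-contained, resting on a single standard fact about eigenvalue moduli, whereas the paper's route exercises the combinatorial machinery (Lemma~\ref{graphR}) that it has already built and reuses elsewhere. A further point in your favor: you spell out the normalization step --- that exactly one of $\lambda_1,\lambda_2$ vanishes, that $\pi H$ remains circulant Hadamard, and that the shift swaps the odd- and even-indexed entries, hence swaps $\lambda_1$ and $\lambda_2$ --- which the paper leaves implicit after establishing $\lambda_1\lambda_2 = 0$.
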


\begin{proof}
With the same notation as in Lemma~\ref{graphR}, put 
\[
a := \sum_{\substack{j \neq 1 \\ 1 \leq j \leq n/2}} \langle R_{1}, R_{j} \rangle, 
\quad
b := \sum_{\substack{j \neq 1 \\ 1 \leq j \leq n/2}} \langle S_{1}, S_{j} \rangle.
\]
Since $h_j^2 = 1$ for all $j$, we have
\[
\langle R_{1}, R_{1} \rangle = \frac{n}{2} = \langle S_{1}, S_{1} \rangle.
\]
Thus,
\[
a + \frac{n}{2} = \langle R_{1}, \sum_{1 \leq j \leq n/2} R_j \rangle
= \langle R_{1}, \lambda_1 v_{0} \rangle = \lambda_1^2,
\]
and
\[
b + \frac{n}{2} = \langle S_{1}, \sum_{1 \leq j \leq n/2} S_j \rangle
= \langle S_{1}, \lambda_2 v_{0} \rangle = \lambda_2^2.
\]
By Lemma~\ref{graphR} we have $a + b = 0$. Adding the two equations above gives
\[
\lambda_1^2 + \lambda_2^2 = n.
\]
Moreover, Lemma~\ref{eigens} implies
\[
\lambda_{1} + \lambda_{2} = h_1 + h_2 + \cdots + h_n \in \{\sqrt{n}, -\sqrt{n}\}.
\]
These two relations together yield $\lambda_1 \lambda_2 = 0$, as claimed.
\end{proof}

In order to prove our second theorem we will recall a special case of the Plotkin bound (see \cite{Plotkin}, \cite{JornQ}) as well as a simple corollary of it involving Hadamard matrices.

\begin{lemma}[Plotkin bound, special case]
\label{plotkin}
Let $V = \mathbb{F}_2^k$ and let $C$ be a finite subset of a subspace $W$ of dimension $m$. Let $d$ be the minimum distance in $C$, namely
\[
d = \min_{\substack{x,y \in C \\ x \neq y}} H(x,y),
\]
where $H(x,y)$ denotes the Hamming distance between $x$ and $y$, i.e., the number of coordinates equal to $1$ in $x+y$. Let $A_2(m,d)$ be the maximum size of a subset $S$ of some $m$-dimensional subspace of $V$ having minimum distance $d$. If $d$ is even and $2d > m$, then
\[
A_2(m,d) \leq 2 \left\lfloor \frac{d}{2d - m} \right\rfloor.
\]
\end{lemma}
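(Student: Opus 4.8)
The plan is to prove this elementary form of the Plotkin bound by double counting the total of all pairwise Hamming distances between codewords. Writing $M := |C|$ and regarding $C$ as a binary code of length $m$ (so that $H$ runs over the $m$ coordinates of the subspace $W$), I would introduce
\[
\Sigma := \sum_{\{x,y\} \subseteq C,\ x \neq y} H(x,y)
\]
and evaluate it in two ways. Grouping the contributions by coordinate, for each $\ell \in \{1,\dots,m\}$ let $z_\ell$ denote the number of codewords having a $0$ in position $\ell$; since a pair $\{x,y\}$ differs in position $\ell$ exactly when one of $x,y$ is $0$ and the other is $1$ there, the number of such pairs is $z_\ell(M - z_\ell)$, and therefore
\[
\Sigma = \sum_{\ell=1}^{m} z_\ell(M - z_\ell).
\]
Two bounds on $\Sigma$ then drive the argument: every pair of distinct codewords is at distance at least $d$, giving $\Sigma \ge \binom{M}{2} d$; and each integer product $z_\ell(M - z_\ell)$ is largest when $z_\ell$ is nearest to $M/2$, giving the sharp per-coordinate estimate $z_\ell(M - z_\ell) \le \lfloor M^2/4 \rfloor$ and hence $\Sigma \le m\lfloor M^2/4 \rfloor$.

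Combining the two estimates yields $\binom{M}{2}\, d \le m\lfloor M^2/4\rfloor$, and the precise shape of the bound --- the leading factor $2$ together with the floor --- will come from treating the parity of $M$ separately. For even $M$ I would use $\lfloor M^2/4\rfloor = M^2/4$; cancelling a factor of $M$ and rearranging reduces $\frac{M(M-1)}{2}\,d \le \frac{mM^2}{4}$ to $M(2d-m) \le 2d$, whence $M/2 \le d/(2d-m)$ by the hypothesis $2d > m$, and since $M/2$ is an integer this forces $M \le 2\lfloor d/(2d-m)\rfloor$. For odd $M$ I would instead use the sharper $\lfloor M^2/4\rfloor = (M^2-1)/4$; the same manipulation then gives $M(2d-m) \le m$, that is $M \le m/(2d-m) = \frac{2d}{2d-m} - 1$.

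It then remains to see that the odd case also obeys the claimed bound. Setting $t := 2d - m \ge 1$ and writing $d/t = q + f$ with $q := \lfloor d/t\rfloor$ and $0 \le f < 1$, the even case reads $M \le 2q + 2f$ with $M$ even and $0 \le 2f < 2$, so that $M \le 2q$ (the next even integer, $2q+2$, already exceeds $2q+2f$), while the odd case reads $M \le 2q + 2f - 1 < 2q + 1$, so that $M \le 2q$ as well. In both cases $M \le 2q = 2\lfloor d/(2d-m)\rfloor$, and taking the maximum over all admissible codes gives $A_2(m,d) \le 2\lfloor d/(2d-m)\rfloor$. I expect the parity-and-floor bookkeeping of this last step to be the only genuinely delicate part: the factor $2$ and the floor in the statement are precisely what the even/odd split produces, and obtaining them without slack relies on the sharp estimate $\lfloor M^2/4\rfloor$ rather than the weaker $M^2/4$. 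Note that only $2d > m$ is actually used; the hypothesis that $d$ is even, which matches the intended application, is not needed for this elementary form of the bound.
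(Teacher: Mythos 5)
Your proof is correct, and it is the classical double-counting (averaging) argument for the Plotkin bound. Note that the paper itself does not prove this lemma at all: it is quoted as a known result, with citations to Plotkin and to Quistorff, so there is no internal proof to compare against, and your argument is essentially the one found in those references. Each step checks out: the coordinatewise count $\Sigma=\sum_{\ell}z_\ell(M-z_\ell)$, the two estimates $\binom{M}{2}d\le\Sigma\le m\lfloor M^2/4\rfloor$, the parity split giving $M(2d-m)\le 2d$ for even $M$ and $M(2d-m)\le m$ for odd $M$, and the floor bookkeeping that collapses both cases to $M\le 2\lfloor d/(2d-m)\rfloor$. Your closing remark is also accurate: evenness of $d$ is never used in this derivation; it matters only for the companion form of the bound for odd $d$ and for sharpness questions.

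One point deserves attention, though it concerns the paper's statement rather than your argument. You silently read the hypothesis ``$C$ is a subset of a subspace $W$ of dimension $m$'' as ``$C$ is a binary code of length $m$,'' i.e., you sum over exactly $m$ coordinates. That reading is the only one under which the lemma is true: as literally stated, with Hamming distance measured in the ambient $\F_2^k$ and $m$ the linear-algebra dimension of $W$, the claim fails. For instance, take $k=2$, $W=\{(0,0),(1,1)\}$, $C=W$; then $m=1$, $d=2$ is even, $2d>m$, yet $|C|=2>2\lfloor 2/3\rfloor=0$. The intended meaning --- and the one used in the application, Lemma~\ref{equal1HAD}, where the relevant coordinates are the $m=n-b$ columns outside the all-ones submatrix --- is a coordinate subspace, on which the distance-preserving projection reduces matters exactly to your setting. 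So your proof establishes the statement the paper actually needs, and you would only want to add one sentence making that identification explicit.
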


\begin{lemma}
\label{equal1HAD}
Let $H = (H_{i,j})$ be a Hadamard matrix of order $n>2$. 
If a submatrix $L$ of $H$ of size $a \times b$
contains only $1$'s (or only $-1$'s), then $ab \leq n$.
\end{lemma}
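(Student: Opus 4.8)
The plan is to exploit the orthogonality of the rows of $H$ via a single second-moment (double-counting) computation. First I would reduce to the all-ones case: since $-H$ is again a Hadamard matrix, and a submatrix of $H$ consisting only of $-1$'s is a submatrix of $-H$ consisting only of $1$'s, it suffices to treat a submatrix $L$ all of whose entries equal $1$. Write $I = \{i_1,\dots,i_a\}$ for the set of row indices and $J = \{j_1,\dots,j_b\}$ for the set of column indices of $L$, so that $H_{i,j}=1$ for every $i \in I$ and $j \in J$. I would also record the basic fact that, because $K = H/\sqrt{n}$ is orthogonal, we have $H H^{*} = n I_n$; as $H$ is square this gives $H^{*}H = n I_n$ as well, so any two distinct rows of $H$ are orthogonal and every row has squared length $n$.

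The key device is the row vector
\[
u := \sum_{i \in I} T_i \in \R^n,
\]
where $T_i$ denotes the $i$-th row of $H$, and the idea is to estimate $\langle u, u\rangle$ in two different ways. Computing $\langle u,u \rangle$ by orthogonality, using $\langle T_i, T_{i'}\rangle = n\,\delta_{i,i'}$, gives
\[
\langle u,u \rangle = \sum_{i,i' \in I} \langle T_i, T_{i'} \rangle = \sum_{i \in I} n = na.
\]
On the other hand, writing $u = (u_1,\dots,u_n)$ and keeping only the columns belonging to $J$,
\[
\langle u,u \rangle = \sum_{j=1}^{n} u_j^2 \;\geq\; \sum_{j \in J} u_j^2 = \sum_{j \in J} \Big(\sum_{i\in I} H_{i,j}\Big)^2 = b\,a^2,
\]
because each chosen column contributes $u_j = \sum_{i \in I} 1 = a$. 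Combining the two displays yields $na \geq a^2 b$, and dividing by $a \geq 1$ gives $ab \leq n$, as desired.

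There is no serious obstacle here; the only non-routine step is recognizing the right auxiliary object, namely that summing exactly the $a$ selected rows and evaluating its norm once through orthogonality and once coordinate-wise converts the all-ones structure of $L$ into the inequality $a^2 b \le na$. (One could dually form the sum of the $b$ selected columns; by the symmetry of $HH^{*}=H^{*}H=nI_n$ this is entirely analogous and also yields $ab \le n$.) I note finally that the argument uses only the orthogonality relations, so the hypothesis $n>2$ is not actually needed for the inequality to hold.
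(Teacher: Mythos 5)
Your proof is correct, but it takes a genuinely different route from the paper. The paper's proof is coding-theoretic: it first shows, via row orthogonality, that any two of the $a$ selected rows of $H$ disagree in exactly $n/2$ of the coordinates outside the chosen column set $J$, so that viewing the rows (restricted to those $m = n-b$ coordinates) as binary codewords gives a code of minimum distance $d = n/2$; it then invokes the Plotkin bound $a \leq 2\lfloor d/(2d-m)\rfloor = 2\lfloor (n/2)/b\rfloor$ and multiplies by $b$ to conclude $ab \leq n$. Your argument replaces all of this with a single second-moment computation on $u = \sum_{i \in I} T_i$: orthogonality gives $\langle u,u\rangle = na$ exactly, while discarding the coordinates outside $J$ gives $\langle u,u\rangle \geq a^2 b$, whence $ab \leq n$. (This is essentially the standard proof of Lindsey's lemma, of which the statement is the constant-submatrix special case.) What each approach buys: yours is more elementary and self-contained --- it needs no external bound, and, as you correctly note, it dispenses with the hypothesis $n > 2$, which the paper needs only to guarantee $4 \mid n$ so that $d = n/2$ is even, a technical requirement of the Plotkin bound as stated in their Lemma~\ref{plotkin}; the paper's route, on the other hand, makes the combinatorial structure explicit (the selected rows form an equidistant code at distance exactly $n/2$), which is information your inequality averages away. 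Both proofs handle the $-1$ case by the same trivial reduction.
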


\begin{proof}
Without loss of generality, assume that all entries of $L$ are equal to $1$. 
Let $1 \leq i_1,\ldots,i_{a} \leq n$ and $1 \leq j_1, \ldots,j_{b} \leq n$ be indices such that 
the entries $(L_{r,s})$ of $L$ with $1 \leq r \leq a$ and $1 \leq s \leq b$ satisfy 
\[
L_{r,s} = H_{i_r,j_s}.
\]
Let $R_{i_1},\ldots,R_{i_{a}}$ be the corresponding rows of $H$ and 
$T_{j_1},\ldots,T_{j_{b}}$ the corresponding columns of $H$. 
Put $C = \{R_{i_1},\ldots,R_{i_{a}}\}$.

For any pair of rows $R_{i_h}, R_{i_k}$ in $C$ (say $R_{i_1},R_{i_2}$), define:  
(a) $x_{1,2}$ as the number of columns $T_k$ with $k \notin \{j_1,\ldots,j_{b}\}$ in which $H_{i_1,k} = H_{i_2,k}$;  
(b) $y_{1,2}$ as the number of such columns in which $H_{i_1,k} \neq H_{i_2,k}$.  
Thus,
\begin{equation}
\label{suma}
x_{1,2}+y_{1,2} = n - b,
\end{equation}
while the orthogonality of $R_{i_1}$ and $R_{i_2}$ implies
\begin{equation}
\label{diferencia}
b + x_{1,2} - y_{1,2} = \langle R_{i_1},R_{i_2}\rangle = 0.
\end{equation}
From \eqref{suma} and \eqref{diferencia} we obtain
\begin{equation}
\label{y12}
y_{1,2} = \frac{n}{2}.
\end{equation}
In other words, \eqref{y12} says that
\begin{equation}
\label{H12}
H(R_{i_1},R_{i_2}) = \frac{n}{2},
\end{equation}
since all elements of $L$ are equal to $1$.

We have thus proved that the minimum Hamming distance $d$ in $C$ satisfies
\begin{equation}
\label{denC}
d = \frac{n}{2}.
\end{equation}
Let $m = n-b$. With the notation of Lemma \ref{plotkin}, we have
\begin{equation}
\label{Adosa}
A_2(m,d) = a.
\end{equation}

Since $H$ is Hadamard and $n>2$, the order $n$ is a multiple of $4$. Hence \eqref{denC} implies that $d$ is even. Moreover, $2d = n > m = n - b$, so that \eqref{Adosa} and Lemma \ref{plotkin} imply
\begin{equation}
\label{aplot}
a \leq 2 \left\lfloor \frac{n/2}{n - (n - b)} \right\rfloor 
= 2 \left\lfloor \frac{n/2}{b} \right\rfloor.
\end{equation}

Multiplying \eqref{aplot} by $b$ we obtain
\begin{equation}
\label{aplot2}
a b \leq 2b \left\lfloor \frac{n/2}{b} \right\rfloor \leq n.
\end{equation}
This proves the result.
\end{proof}

\section{Proof of Theorem \ref{HadBM}}
\label{HaddoneBM}

\begin{proof}
By Lemma \ref{crankreal}, it suffices to prove parts (c) and (d).

We first prove (c):  
By Lemma \ref{circrank1}, we can assume that the first row of $E_1$ has exactly $h^2$ entries equal to $1$ and $h^2$ entries equal to $-1$. This implies that $E_2$ has $h^2+h$ entries in its first row equal to $1$ and $h^2-h$ entries equal to $-1$. On the other hand, either all entries in the first row of $E_2$ are equal, in which case we must have $h^2+h=2h^2$, i.e., $h^2=h$, hence $h=1$, or half of them are equal to $1$, in which case we must have either $h^2+h=h^2$ or $h^2-h=h^2$, both of which are impossible. Therefore, $h=1$, and hence $n=4$.

To prove part (d), put $R_1 := [k_1,\ldots,k_{n/2}]$. Observe that $\langle R_{1},R_{1}\rangle = n/2$. Lemma \ref{cauchyschw}, applied to each of the terms $k_j k_{j-1}$ that sum up to $\langle R_{1},R_{2}\rangle$ (for indices $j=1,\ldots,n/2+1$, with $k_{n/2+1}=k_1$), shows that all $k_j$ with odd $j$ are equal, say to $c_1 \in \{-1,1\}$ (there are exactly $h^2$ such values of $j$), and that all $k_j$ with even $j$ are equal, say to $c_2 \in \{-1,1\}$ (there are also exactly $h^2$ such values of $j$, since $n/2 = 2h^2$). By Lemma \ref{circrank1}, we can assume without loss of generality that $c_1+c_2=0$. Thus, the first row of $E_1$ has exactly $h^2$ entries equal to $1$ and $h^2$ entries equal to $-1$.

Using the same argument with $S_1$, it remains to consider the following case: Lemma \ref{circrank1} implies that the first row of $E_2$ has $h^2$ entries equal to $1$ and $h^2$ entries equal to $1$, i.e., $c_1=c_2$. In other words, all entries of $E_2$ are equal to $1$. This implies that $E_1$ must contain $h$ entries equal to $1$ and $2h^2-h$ entries equal to $-1$. This is impossible when $h>1$ by Lemma  \ref{circrank1} since $E_1$ has order $2h^2$ and rank $1$. Therefore, $h=1$, and hence $n=4$.
\end{proof}

\section{Proof of Theorem \ref{PlotkinR}}
\label{Hdone2}

\begin{proof}
Assume, by contradiction, that $H$ is a circulant Hadamard matrix with $n > 4$.

We first prove part (a).  
By Lemma \ref{regular}, we have $n = 4h^2$ with $h$ odd. Assume, without loss of generality, that $E_1$ has rank $1$. By Lemma \ref{circrank1}, we can assume, without loss of generality, either:

(i) $h_1 = h_3 = \ldots = h_{n-1}$, or  
(ii) $h_1 = 1$, $h_3 = -1$, $\ldots$, $h_{n-3} = 1$, $h_{n-1} = -1$.

When (i) holds, we consider the submatrix $L$ of $H$ equal to $E_1$. More precisely, the entries of $L$ are those of $E_1$, at the intersection of rows $1,3,5,\ldots,4h^2-1$ and columns $1,3,5,\ldots,4h^2-1$ of $H$. Since $L$ has size $2h^2 \times 2h^2$ and all its entries are equal to $1$, Lemma \ref{equal1HAD} implies that
\begin{equation}
\label{abbound}
2h^2 \cdot 2h^2 \leq 4h^2.
\end{equation}
In other words, \eqref{abbound} says that $h^2 \leq 1$. But $h$ is odd, so $h=1$, and $n=4$, contradicting our assumption $n>4$. This proves part (i) of (a).

Similarly, when (ii) holds, we consider the submatrix $L$ of $H$ equal to the submatrix $F_1 = \circm(h_1,h_5,\ldots,h_{4h^2-5},h_{4h^2-1})$ of $E_1$. The entries of $L$ are those of $F_1$ at the intersection of rows $1,5,\ldots,4h^2-5,4h^2-1$ and columns $1,5,\ldots,4h^2-5,4h^2-1$ of $H$. Since $L$ has size $h^2 \times h^2$ and all its entries are equal to $1$, Lemma \ref{equal1HAD} implies that
\begin{equation}
\label{abbnew}
h^2 \cdot h^2 \leq 4h^2,
\end{equation}
so that $h^2$ is an odd number with $h^2 \leq 4$, i.e., $h=1$. As before, we get the contradiction $n=4$. This proves part (a) of the theorem.

To prove part (b), assume that both $E_1$ and $E_2$ have rank $2$. By Lemma \ref{regular}, $n = 4h^2$ with $h$ odd. Then, by Lemma \ref{misscase} (and changing $H$ to $-H$ if necessary), we may assume that
\begin{equation}
\label{raizn}
h_2+h_4+\cdots+h_{4h^2} = 2h.
\end{equation}

Let $v,w$ be the first two rows of $E_2 = \circm(h_2,h_4,\ldots,h_{4h^2})$. We claim that $v$ and $w$ are $\Q$-linearly independent. Otherwise, putting $C=E_2$ in Lemma \ref{circrank1} yields
\begin{equation}
\label{iguales}
h_2 = h_4 = \cdots = h_{4h^2} = 1.
\end{equation}
From \eqref{iguales} and \eqref{raizn} we get
\begin{equation}
\label{hunoagain}
2h^2 = 2h,
\end{equation}
i.e., the contradiction $h=1$. This proves the claim.

Let $u$ be the third row of $E_2$. Since $v$ and $w$ form a basis of the $\Q$-vector space generated by the rows of $E_2$, there exist rational numbers $a,b$ such that
\begin{equation}
\label{losab}
u = av + bw.
\end{equation}
Let $S = h_2+h_4+\cdots+h_{4h^2}$. Since, by \eqref{raizn}, $S \neq 0$, Lemma \ref{circrank2} with $D=E_2$ and $s=S$ implies
\begin{equation}
\label{ab1uno1}
a+b = 1.
\end{equation}

Let $h_{2j}$ be any entry of $u$, with $j=2h^2-1, 2h^2, 1, 2, \ldots, 2h^2-2$. By \eqref{losab} we have
\begin{equation}
\label{ab2h}
h_{2j} = a h_{2j+4} + b h_{2j+2}.
\end{equation}
Assume $ab \neq 0$. By Lemma \ref{klmrank2}, \eqref{ab2h} implies
\begin{equation}
\label{tresig}
h_{2j} = h_{2j+4} = h_{2j+2}
\end{equation}
for all such $j$. In other words, all entries of $v$ are equal. By \eqref{raizn} they must be $1$, and so \eqref{raizn} reads exactly as \eqref{hunoagain}, giving again the contradiction $h=1$. Therefore $ab=0$. By \eqref{ab1uno1}, the only cases in \eqref{losab} are $w=u$ when $(a,b)=(0,1)$ and $v=u$ when $(a,b)=(1,0)$.

Since $w$ and $u$ are consecutive rows of $E_2$, the same argument used above to show $v$ and $w$ are $\Q$-linearly independent applies to show $w$ and $u$ are also $\Q$-linearly independent. Thus, $w \neq u$, so the case $w=u$ is impossible. Assume then $v=u$. By Lemma \ref{uvuu} with $C=E_2$ and $s=S$, we obtain
\begin{equation}
\label{h2h2}
S \in \{0, 2h^2, -2h^2\}.
\end{equation}
But by \eqref{raizn}, $S = 2h$. From \eqref{h2h2} it follows that \eqref{hunoagain} still holds, yielding again the contradiction $h=1$.

This finishes the proof of the theorem.
\end{proof}











\end{document}